\documentclass[aps,prd,onecolumn]{revtex4-2}
\AtBeginDocument{\fontsize{12.4pt}{14.0pt}\selectfont}

\usepackage[T1]{fontenc}
\usepackage[utf8]{inputenc}
\usepackage{libertinus}
\usepackage{microtype}
\usepackage{amsmath,amssymb,amsthm,mathtools}

\usepackage[
  colorlinks=true,
  linkcolor=blue,
  citecolor=blue,
  urlcolor=blue
]{hyperref}

\usepackage{bookmark}

\usepackage{enumitem}

\usepackage{tikz}
\usetikzlibrary{calc}

\makeatletter
\renewcommand{\section}{%
  \@startsection{section}{1}{\z@}%
    {5pt}
    {5pt}
    {\normalfont\bfseries\centering}%
}
\makeatother

\setlist[itemize]{leftmargin=2em}
\setlist[enumerate]{leftmargin=2em}

\newtheorem{theorem}{Theorem}[section]
\newtheorem{proposition}[theorem]{Proposition}
\newtheorem{lemma}[theorem]{Lemma}

\theoremstyle{definition}
\newtheorem{definition}[theorem]{Definition}

\newcommand{\R}{\mathbb{R}}

\newcommand{\restr}{\upharpoonright}

\usepackage{bm}        
\usepackage{braket}    

\usepackage[
  top=1.65cm,
  bottom=2.30cm,
  left=2.10cm,
  right=2.10cm,
  footskip=1.0cm
]{geometry}

\usepackage{graphicx}

\newcommand{\triLeqTriH}{1.80}
\newcommand{\triLeqTriV}{1.20}
\newcommand{\triGeqTriH}{1.80}
\newcommand{\triGeqTriV}{1.20}

\newcommand{\triLeqDashScale}{1.40}
\newcommand{\triGeqDashScale}{1.30}

\newcommand{\triLeqDashRaise}{-0.32ex}
\newcommand{\triGeqDashRaise}{-1.11ex}

\newcommand{\triLeqKern}{-1.13em}
\newcommand{\triGeqKern}{-1.05em}

\newcommand{\triLeqDashAngle}{-20}
\newcommand{\triGeqDashAngle}{20}

\makeatletter
\newcommand{\trileqslant}{\mathrel{\mathpalette\trileqslant@{}}}
\newcommand{\trileqslant@}[2]{%
  \scalebox{\triLeqTriH}[\triLeqTriV]{$#1\triangleleft$}%
  \kern\triLeqKern%
  \raisebox{\triLeqDashRaise}{%
    \scalebox{\triLeqDashScale}{\rotatebox{\triLeqDashAngle}{$#1-$}}%
  }%
}

\newcommand{\trill}{\mathrel{\mathpalette\trill@{}}}
\newcommand{\trill@}[2]{%
  \scalebox{\triLeqTriH}[\triLeqTriV]{$#1\triangleleft$}%
}

\newcommand{\trigeqslant}{\mathrel{\mathpalette\trigeqslant@{}}}
\newcommand{\trigeqslant@}[2]{%
  \scalebox{\triGeqTriH}[\triGeqTriV]{$#1\triangleright$}%
  \kern\triGeqKern%
  \raisebox{\triGeqDashRaise}{%
    \scalebox{\triGeqDashScale}{\rotatebox{\triGeqDashAngle}{$#1-$}}%
  }%
}

\newcommand{\trigg}{\mathrel{\mathpalette\trigg@{}}}
\newcommand{\trigg@}[2]{%
  \scalebox{\triGeqTriH}[\triGeqTriV]{$#1\triangleright$}%
}
\makeatother

\usepackage[dvipsnames,svgnames,x11names]{xcolor}
\usepackage{tikz}

\usepackage[
  font=small,
  labelfont=bf,
  format=plain,
  width=.75\textwidth,
  justification=RaggedRight
]{caption}
\usepackage{pgfplots}
\pgfplotsset{compat=1.18}

\usepackage[alpine,clock,electronic,geometry,misc,weather]{ifsym}

\newcommand{\C}{\mathfrak{C}}

\newcount\mrows \mrows=12
\newcount\ncols \ncols=8
\newcount\lo \lo=0
\newcount\hi \hi=99

\begin{document}

\makeatletter
\setlength{\parskip}{10pt}
\setlength{\parindent}{20pt}

\renewcommand{\section}{%
  \@startsection{section}{1}{\z@}%
    {5pt}
    {5pt}
    {\normalfont\bfseries\centering}%
}

\def\@hangfrom@section#1#2#3{%
  \@hangfrom{#1#2}#3%
}

\makeatother

\title{\Large Does the Hardin--Taylor Predictor Actually Predict the Future?\\
Yes; but Only in Retrospect}
\author{Ulvi Yurtsever}
\email{ulvihy@gmail.com}
\affiliation{MathSense Analytics, 4834 Stern Avenue, Sherman Oaks, CA 91423}


\date{\today}


\begin{abstract}
The Hardin-Taylor theorem, a consequence of the axiom of choice, gives a
strategy to predict future values of an arbitrary function of time $f:\R\to S$
given its past values. The times in $\R$ at which the strategy fails
are shown to form a countable nowhere dense set for any function $f$.
The theorem is often presented as a striking result
about predicting the future. In this note we discuss the operational content of the theorem and
clarify this frequent interpretation.
The Hardin--Taylor prediction strategy is online: at time $t$ it depends only
on the observed past $f\restr(-\infty,t)$.  Its performance guarantee,
however, is offline: the exceptional set consisting of times at which the
predictor fails is defined only after the entire function has been realized.
We spell out the precise definition of the predictor, discuss
a simple two-function example as a sanity check, and emphasize
the retrospective nature of verifying the predictor's success.
We also examine the proper probabilistic setting
in which the success likelihood of the predictor as a forecasting
procedure should be evaluated. Our conclusion is that the theorem is best
understood as a result about the smallness of the set of times at which the predictor
must be course-corrected,
rather than as a prospectively verifiable procedure for forecasting the future.
\end{abstract}

\maketitle
\vspace{-14pt}

\bookmarksetup{startatroot}

\section{Introduction}

The Hardin-Taylor theorem\cite{HardinTaylor2008, HardinTaylor2013}
is one of the more striking consequences of the axiom of choice.  In one common formulation, the theorem asserts that given the complete past before time $t$ of an arbitrary (not necessarily continuous) function of time $f:\R\to S$, there is a strategy which predicts the present value $f(t)$ correctly for all but countably many times $t$.
A seemingly stronger version states that the strategy
can even produce a continuation agreeing with $f$ on a right-neighborhood of $t$,
again except for a small exceptional set of times $t$.

Many expositions, especially those with a popularizing flavor\cite{Hamkins2025},
make it easy for the reader to take away a stronger operational conclusion
than the theorem actually supports.  In ordinary language, prediction suggests
a prospective ability: at a given time and using data up to that time, one makes
a forecast and has some warrant, available then, that the forecast is likely to be right.
The Hardin-Taylor theorem has a different logic.  Its predictor is indeed defined causally,
using only the past, but the predictor's
construction is continually course-corrected as time rolls along, and the assertion
that it succeeds almost everywhere is verified only retrospectively,
after the entire function has been revealed.

%

The purpose of this note is not to criticize existing expositions or
interpretations of the theorem, but rather to clarify three points that are
often insufficiently explained or emphasized:
\vspace{-10pt}
\begin{enumerate}
    \item the online (synchronous) definition of the predictor;
    \item the offline (asynchronous) definition of its exceptional
    (failure) set relative to the completed function \(f\);
    \item the appropriate probabilistic framework for assessing the
    efficacy of the predictor as a forecasting procedure.
\end{enumerate}
Once these points are examined, the theorem becomes less mysterious.
It is fundamentally a theorem about the well-ordered set of times at which
course corrections to the predictor are necessary, and about the surprising
smallness of this set in \(\mathbb R\). Its dramatic force comes from the
axiom of choice, whose application in the theorem does not by itself confer
any prospective forecasting power.

Our note is closely related in subject matter, but distinct in purpose, from
the recent work of Cox and Elpers\cite{CoxElpers}, who study the robustness
of the Hardin--Taylor predictor under changes of the time coordinate. 

\section{Notation and definition of the predictor}

Let $S$ be a multi-element set of possible states.
In the discussion below one may assume $S=\{0,1\}$ or $S=\R$.  Let $S^\R$ be the set of all total
functions from $\R$ to $S$.
We regard $t\in\R$ as time, and each $f \in S^\R$ as a dynamical system.
For $f\in\ S^\R$, define its past history at time $t$ by
\[
    f_{<t}=f\restr(-\infty,t).
\]
Thus the present value $f(t)$ is not included in the history $f_{<t}$.

Assume the axiom of choice and fix a well-order $\; \trileqslant$ on $S^\R$.  For a partial history $h:(-\infty,t)\to S$, define
\[
    G(h)=\text{the \; $\trileqslant$-least }g\in S^\R \; \text{ such that }\; g\restr(-\infty,t)=h.
\]
For a total function $f\in S^\R$, define
\[
    g_t^f \equiv G(f_{<t}).
\]
Thus the total function $g_t^f$ is the $\trileqslant$-smallest dynamical system compatible
with the past history of $f$ at time $t$, in other words the $\trileqslant$-smallest ``prediction" compatible with $f\restr (-\infty , t)$. Call the total function $s \mapsto g_t^f (s)$ the Hardin-Taylor predictor of $f$ at time $t$: it is equal to $f(s)$ for $s<t$ and targeted (but not guaranteed) to equal $f(s)$ for $s \geqslant t$. Note that the predictor $g_t^f (s)$ is in principle continually redefined (via the dependence on $t$ of the function $g_t^f$) as time $t$ rolls along.

The natural failure/success notion for the predictor is the following:

\begin{definition}[Predictor failure/success]
The Hardin-Taylor predictor $g_t^f$
of $f$ at time $t$ succeeds if there is some $\varepsilon>0$ such that
\[
    g_t^f \restr[t,t+\varepsilon)=f\restr[t,t+\varepsilon).
\]
The times $t$ at which the predictor fails constitute the failure set
\begin{align}
    P_f \equiv \bigl\{t\in\R: \forall \varepsilon>0\;
     \exists s \in [t, t+\varepsilon ) \; \text{ with } \; g_t^f (s) \ne f(s)\bigr\}. \nonumber
\end{align}
\end{definition}
\noindent We can now make the notion of course-correction  more precise:
Suppose the predictor is successful at the time $t=t_0$ but then fails at some later time.
Since by the Hardin-Taylor theorem
(Theorem III.2 below) the failure set $P_f$ is well ordered in $\R$,
there is a time of first failure after $t_0$,
say at $t= t_1$. Thus $t_1 \in P_f$, $ t_1 > t_0$,
and the prediction at $t_0$ remains successful until $t_1$:
$g_{t_0}^f (s) = f(s) \; \forall s \in [t_0, t_1)$.
We show in Lemma III.1 below
(monotonicity of $\trileqslant$-least
predictions compatible with $f$) that whenever $s<t$ we have
$g_s^f\trileqslant g_t^f$, and the strict inequality $g_s^f\trill g_t^f$
holds if there is a failure time $u \in [s,t)$.
This means that not only $\forall s \in [t_0, t_1)$ the prediction $g_{t_0}^f$
is successful: $g_{t_0}^f (s) = f(s)$, but the rolling
predictor $g_s^f$ remains anchored at $g_{t_0}^f$ throughout the interval:
$g_s^f = g_{t_0}^f \;\; \forall s \in [t_0, t_1)$. No redefinition of the
predictor is necessary in this interval. By contrast, since at $t_1$ the predictor does fail,
a course correction (redefinition) {\em is} necessary: for $t > t_1$
the rolling predictor $g_t^f$ is no longer equal to the function $g_{t_0}^f$.
The set $P_f$ is the set of necessary course corrections to the Hardin-Taylor predictor.
It marks the times at which the currently selected prediction
must be discarded and the rolling selector must move upward in the well-order $\trileqslant$.

\section{The Hardin--Taylor mechanism}

The basic monotonicity observation is immediate.

\begin{lemma}[Monotonicity of $\trileqslant$-least predictions compatible with $f$]
If $s<t$, then
\[
    g_s^f\trileqslant g_t^f.
\]
Moreover, if $g_s^f$ fails to agree with $f$ somewhere in $(-\infty,t)$, then
\[
    g_s^f\trill g_t^f.
\]
\end{lemma}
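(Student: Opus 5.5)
The plan is to compare the two sets of candidates over which the $\trileqslant$-least element is taken. Write
\[
C_t=\{g\in S^\R : g\restr(-\infty,t)=f_{<t}\},
\]
so that $g_t^f$ is the $\trileqslant$-least element of $C_t$. This element exists because $\trileqslant$ is a well-order and $C_t\neq\emptyset$ (it contains $f$).

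For the first claim, take $s<t$. Any $g\in C_t$ agrees with $f$ on $(-\infty,t)\supseteq(-\infty,s)$, so $g\in C_s$. Hence $C_t\subseteq C_s$. The least element of a larger set lies $\trileqslant$-below every element of a subset. In particular $g_s^f\trileqslant g_t^f$, since $g_t^f\in C_t\subseteq C_s$ and $g_s^f$ is least in $C_s$.

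For the strict inequality, suppose $g_s^f(u)\neq f(u)$ for some $u<t$. Then $g_s^f\restr(-\infty,t)\neq f_{<t}$, so $g_s^f\notin C_t$. Since $g_t^f\in C_t$, we get $g_s^f\neq g_t^f$. Combined with $g_s^f\trileqslant g_t^f$ and antisymmetry of the well-order, this gives $g_s^f\trill g_t^f$. One remark is worth recording: any disagreement point $u$ necessarily satisfies $u\ge s$, because $g_s^f$ agrees with $f$ below $s$, so $u\in[s,t)$.

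There is no real obstacle here. The only point needing care is to make explicit that $\trill$ means $\trileqslant$ together with inequality. I also want to connect the hypothesis to the introduction's phrase ``failure time $u\in[s,t)$''. For that I would note, without proving it fully here, that if some $u\in[s,t)$ lies in $P_f$, then either $g_u^f\neq g_s^f$ already, or $g_s^f=g_u^f$ disagrees with $f$ arbitrarily close to the right of $u$, hence somewhere in $(-\infty,t)$. In both cases monotonicity applied along $s\le u<t$ yields strictness.
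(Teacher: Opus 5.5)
Your proof is correct and is essentially the paper's argument: the inclusion $C_t\subseteq C_s$ of candidate sets gives $g_s^f\trileqslant g_t^f$, and a disagreement with $f$ before $t$ excludes $g_s^f$ from $C_t$, so $g_s^f\neq g_t^f$ and the inequality is strict. Your closing remark is also correct: a failure time $u\in[s,t)$ forces strictness, either because $g_u^f\neq g_s^f$ or because $g_s^f=g_u^f$ disagrees with $f$ somewhere in $[u,t)$. It usefully justifies the introduction's phrasing, which the paper's proof does not spell out.
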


\begin{proof}
If $s<t$, every total function compatible with $f_{<t}$ is also compatible with $f_{<s}$.  Thus the set from which $g_t^f$ is chosen is a subset of the set from which $g_s^f$ is chosen.  Since $g_s^f$ is the least member of the larger set, $g_s^f\trileqslant g_t^f$.  If $g_s^f$ disagrees with $f$ before $t$, then $g_s^f$ is not compatible with $f_{<t}$, so it cannot equal $g_t^f$; hence the inequality is strict.
\end{proof}

This lemma is the whole engine of the theorem.  A failed prediction eliminates the current
$\trileqslant$-least predictor and replaces it with a course-corrected new predictor.

\begin{theorem}[Hardin--Taylor]
For every $f\in S^\R$, the failure set $P_f$ is well-ordered by the usual order
$\leqslant$ on $\R$.
In particular, $P_f$ is countable and nowhere dense.
Equivalently, for every $t\notin P_f$, there is $\varepsilon>0$ such that
\[
    g_t^f\restr(-\infty,t+\varepsilon)=f\restr(-\infty,t+\varepsilon).
\]
\end{theorem}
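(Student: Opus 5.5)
The plan is to build an order-preserving injection from $(P_f,\leqslant)$ into the well-ordered set $(S^\R,\trileqslant)$, namely the map $t\mapsto g_t^f$. Every subset of a well-ordered set is well-ordered, and an order-embedding pulls well-foundedness back. So once we show that $s<t$ with $s,t\in P_f$ forces $g_s^f\trill g_t^f$, any strictly decreasing sequence in $P_f$ will produce a strictly $\trileqslant$-decreasing sequence in $S^\R$. That is impossible, so $P_f$ is well-ordered by $\leqslant$.

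The key step is this strict monotonicity. Take $s<t$ with $s\in P_f$. By the definition of $P_f$ with $\varepsilon=t-s$, there is some $u\in[s,t)$ with $g_s^f(u)\ne f(u)$. Hence $g_s^f$ disagrees with $f$ somewhere in $(-\infty,t)$, and the second part of Lemma III.1 gives $g_s^f\trill g_t^f$. This argument only needs $s\in P_f$, not $t\in P_f$, so the map $t\mapsto g_t^f$ is strictly increasing on $P_f$ and in particular injective. This yields well-orderedness.

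Next we derive the consequences. Countability follows from the standard fact that a well-ordered subset of $\R$ is countable. For each $t\in P_f$ other than its maximum (if one exists), let $t^+$ be its immediate successor in $P_f$, and pick a rational in $(t,t^+)$. These open intervals are pairwise disjoint, so the choice of rationals is injective, and adding back the possible maximum leaves the set countable.

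For nowhere density, suppose the closure of $P_f$ contained an open interval $(a,b)$. Then $P_f$ would be dense in $(a,b)$. We could then pick $x_1\in P_f\cap(a,b)$, then $x_2\in P_f\cap(a,x_1)$, and so on, giving an infinite strictly decreasing sequence in $P_f$. That is a contradiction.

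Finally, the reformulation for $t\notin P_f$ is essentially a restatement of success. If $t\notin P_f$, there is $\varepsilon>0$ with $g_t^f\restr[t,t+\varepsilon)=f\restr[t,t+\varepsilon)$. Also $g_t^f\restr(-\infty,t)=f_{<t}$ by the definition of $G$. Combining the two gives agreement on $(-\infty,t+\varepsilon)$. Conversely, agreement on $(-\infty,t+\varepsilon)$ clearly implies success at $t$.

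I do not expect a serious obstacle. The one point needing care is extracting a witness $u\in[s,t)$ by choosing $\varepsilon=t-s$ in the definition of failure, so that Lemma III.1 applies to give a strict inequality rather than merely $\trileqslant$. The nowhere-density step also needs care, since we must argue about the closure of $P_f$ and not just $P_f$ itself.
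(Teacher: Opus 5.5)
Your proof is correct and takes essentially the same approach as the paper's. You show strict monotonicity $g_s^f\trill g_t^f$ for $s<t$, $s\in P_f$, by taking $\varepsilon=t-s$ and applying Lemma III.1, which rules out an infinite descending chain; countability then follows from the successor-and-rational argument, and nowhere density from nested choices inside an interval of the closure. You also explicitly check the final ``equivalently'' clause as a restatement of the definition of success, which the paper's proof leaves implicit.
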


\begin{proof}[Proof]
Suppose to the contrary that $P_f \subset \R$
is not well ordered. Then we can find $t_0>t_1>t_2>\cdots > t_n > t_{n+1} > \cdots$,
an infinite strictly decreasing sequence of failure times.
Fix $n$.  Since $t_{n+1}\in P_f$, there is no right-neighborhood of
$t_{n+1}$ on which $g_{t_{n+1}}^f$ agrees with $f$.
Taking $\varepsilon=t_n-t_{n+1}$, we find some $s<t_n$ such that
\[
    g_{t_{n+1}}^f(s)\ne f(s).
\]
Because $g_{t_{n+1}}^f$ already agrees with $f$ on $(-\infty,t_{n+1})$, the disagreement occurs after $t_{n+1}$ but before $t_n$.  Hence $g_{t_{n+1}}^f$ is incompatible with the later history $f_{<t_n}$.  By the monotonicity lemma,
\[
    g_{t_{n+1}}^f\trill g_{t_n}^f.
\]
Thus $g_{t_0}^f\trigg g_{t_1}^f\trigg g_{t_2}^f\trigg\cdots \trigg g_{t_n}^f \trigg g_{t_{n+1}}^f
\trigg \cdots $ is an infinite
strictly decreasing sequence in the well-order $\trileqslant$, which is impossible.

Every well-ordered subset $A$ of $(\R , \leqslant )$ is countable because
any element \(a \in A\) has a successor \(a^+\) (set $= \infty$ in case $a \in A$ is largest) and can
be assigned a rational number in \((a,a^+)\), putting $A$ in 1:1 correspondence with
a subset of the rationals. It is nowhere dense as well:
if the closure \(\bar{ A}\) contained a nonempty interval,
then recursively choosing points of \(A\) in successively smaller
subintervals would produce a strictly decreasing sequence in \(A\),
contradicting well-ordering.
\end{proof}

\section{A two-function sanity check}

The following example is useful in conceptualizing the operational content of the theorem.

\begin{proposition}
Let $S=\R$ and define two functions $f_1,f_2 \in \R^\R$ by
\[
    f_1 (x)=
    \begin{cases}
    0,&x < 0,\\
    1,&x \geqslant 0,
    \end{cases}
    \qquad
    f_2 (x)=0\quad\text{for all }x\in\R.
\]
Then $f_{1<0}=f_{2<0}$, hence
\[
    G(f_{1<0})=G(f_{2<0}).
\]
Consequently the predictor must fail at $t=0$ for at least one of $f_1$ and $f_2$.
\end{proposition}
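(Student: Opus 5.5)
The plan is to argue directly from the definitions of $G$, $g_t^f$ and $P_f$; no deep machinery is needed beyond the fact that $G$ is a function of the history alone. First, I will observe that $f_1$ and $f_2$ coincide on $(-\infty,0)$: both are identically $0$ there. Hence $f_{1<0}=f_{2<0}$ as partial histories $h:(-\infty,0)\to\R$. Since $G(h)$ is defined as the $\trileqslant$-least total function extending $h$, it depends only on $h$, and so $G(f_{1<0})=G(f_{2<0})$. Write $g$ for this common function. By definition, $g_0^{f_1}=g_0^{f_2}=g$.

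Next, I will show that $g$ cannot succeed at $t=0$ for both functions. Suppose the predictor succeeded at $0$ for both $f_1$ and $f_2$. Then there would be $\varepsilon_1,\varepsilon_2>0$ with $g\restr[0,\varepsilon_1)=f_1\restr[0,\varepsilon_1)$ and $g\restr[0,\varepsilon_2)=f_2\restr[0,\varepsilon_2)$. Evaluating both at the single point $s=0$, which lies in both intervals, gives $g(0)=f_1(0)=1$ and $g(0)=f_2(0)=0$, a contradiction. Therefore $0\in P_{f_1}\cup P_{f_2}$, which is the claim.

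Finally, I will split according to the value $g(0)$ to identify which function fails. If $g(0)\neq 1$, then $s=0$ witnesses failure for $f_1$ for every $\varepsilon$, so $0\in P_{f_1}$. If $g(0)=1$, then $0\in P_{f_2}$. The only point needing care, and the closest thing to an obstacle, is that the definition of $P_f$ uses the half-open interval $[t,t+\varepsilon)$ including $t$ itself. This is exactly why evaluating at $s=0$ suffices, and why the present value $f(0)$, which is excluded from the history $f_{<0}$, cannot be inferred from that history.
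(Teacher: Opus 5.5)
Your proposal is correct and follows essentially the same route as the paper: identical histories force a common prediction $g=G(f_{1<0})=G(f_{2<0})$, and since $f_1$ and $f_2$ already differ at $s=0\in[0,\varepsilon)$, no single $g$ can agree with both on a right-interval at $0$. Your case split on $g(0)$ is a harmless refinement that pinpoints which of $P_{f_1}$ and $P_{f_2}$ contains $0$.
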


\begin{proof}
Because the histories before $0$ are identical, the predictor receives
the same input for $f_1$ and $f_2$ at time $0$.  Therefore it produces
the same total function, say $g$, as prediction.
But $f_1$ and $f_2$ disagree on every interval $[0,\varepsilon)$.
No single function $g$ can agree with both of them on such an interval.
Thus $0\in P_{f_1}\cup P_{f_2}$.
\end{proof}

This example shows why the exceptional set $P_f$
must be attached to the complete function.
The input to the predictor at time $0$ is the same for $f_1$ and $f_2$,
but the success or failure of the prediction depends
on which completion of the common past is realized.
By contrast, for $t>0$, the histories $f_{1<t}$ and $f_{2<t}$ are no longer the same, since they differ on $[0,t)$.  Thus there is no reason to expect $G(f_{1<t})=G(f_{2<t})$
for positive $t$.

\section{Online prediction and offline verification}

The predictor selection rule
\[
    h \longmapsto G(h)
\]
is online: at time $t$ it depends only on the past $h = f_{<t}$.  But the statement that the prediction succeeds except on a small set is not an online statement.  It is a statement about the completed trajectory of $f$, since the exceptional set $P_f$ of failures
is defined by comparison with the entire function.
At a given time $t$ there is generally no criterion, supplied by the theorem,
telling us whether $t\notin P_f$ or $t\in P_f$.
The theorem's conclusion that the set of failures
is small is revealed after the fact.
This is the sense in which the future is predicted only in retrospect.
The predictor does not use hindsight, but its performance guarantee is formulated and verified with hindsight.

\newpage
\section{How well can the Hardin-Taylor predictor predict the future?}

The smallness of the failure set $P_f$ guaranteed by the Hardin-Taylor theorem is surprising.
It is tempting to interpret this astonishing result as a probabilistic statement
about the efficacy of the predictor: failure is a measure-zero event, success
happens with probability 1 (on a set of full measure). The predictor is as perfect as can be.

But this is the wrong probabilistic approach: the conclusion that
a randomly chosen time in $\R$ falls
outside the failure set with probability 1
is irrelevant as a gauge of the predictor's forecasting ability, because
the failure set is defined only after $f$, the target of the forecast, is known.
In the current light this approach has the flavor of a hat trick.


The correct probabilistic approach should
start by recognizing that the relevant sample space
is the space of functions $S^\R$, not the set of times $\R$.
On one hand, we have the theorem's result about
the smallness of the predictor's retrospective failure
set in $\R$; on the other, we have the prospective question about its performance
as a forecasting procedure.
We attempt to delineate the two using the following formulation:

For definiteness we set $S=\R$.
Fix a well-order $\trileqslant$ on $\R^\R$,  a time \(t_{0}\), and an
observed past history
\[
        h=f\upharpoonright(-\infty,t_{0}).
\]
Let \(\C_h\) be the class of all total functions
\(f:\mathbb R\to\mathbb R\) extending this history, and let
\[
        q_h = G(h)
\]
be the Hardin-Taylor predictor selected using $h$ at time $t_0$.
The proper assessment of
the forecasting power of the predictor demands asking the following question:
what is the probability that the predictor $q_h$ is successful at $t_0$
if the true total function $f$ is picked randomly from $\C_h$?
In plain terms: how likely is the predictor to do well in forecasting
the continuation of $h$?

The answer requires a probability law
\(\mu_h\) on the space \(\C_h\) of possible continuations of the
observed past.  The relevant event is not the event that a randomly chosen
time avoids the exceptional set \(P_f\), but rather the event $A_h \subset \C_h$
defined by the predictor's success
\[
        A_h =
        \left\{
        f\in\C_h :
        \exists \varepsilon>0 \ 
        f \upharpoonright [t_{0},t_{0}+\varepsilon)
        =
        q_h \upharpoonright [t_{0},t_{0}+\varepsilon)
        \right\}.
\]
Thus the forecasting question is whether \(\mu_h(A_h)\) is large. Under
any prior for which agreement with a prescribed continuation
on a right-interval has zero probability, one has\cite{ftnote1}
\[
        \mu_h(A_h)=0.
\]
This would be the case for many standard stochastic
models in which genuine future randomness remains after the past has been fixed.  To be clear,
such a probabilistic failure result does not follow from the Hardin-Taylor theorem itself;
the theorem's analysis is not concerned with possible stochastic models on $\C_h$.

\begin{figure}[t]
\centering

\begin{tikzpicture}[
    x=1cm,y=1cm,
    axis/.style={black!70,thin},
    truef/.style={blue!70!black,thick},
    pred/.style={green!55!black,very thick},
    alt/.style={red!70!black,thick},
    guide/.style={black!60,densely dotted,thin},
    every node/.style={font=\small}
]


\newcommand{\hpast}[1]{%
  1.15*sin(55*((#1)-1.2))-0.55%
}

\pgfmathsetmacro{\yzero}{1.15*sin(55*(0-1.2))-0.55}

\newcommand{\qfuture}[1]{%
  \yzero
  +0.25*(#1)
  +0.95*(1-exp(-0.9*(#1)))
  +0.06*sin(55*(#1))%
}


\begin{scope}[yshift=5.3cm]

\node[font=\bfseries,align=center] at (0,2.4)
{(a) Random time in a fixed world:
 Fixed \(f\), random \(t_0\); almost-sure success.};

\draw[axis] (-4.1,0) -- (4.1,0) node[right] {\(t\)};
\draw[axis] (-3.75,-2.1) -- (-3.75,2.15);


\draw[truef]
    plot[domain=-3.7:0,samples=120,smooth]
    (\x,{\hpast{\x}});

\draw[truef]
    plot[domain=0:3.7,samples=120,smooth]
    (\x,{\qfuture{\x}});

\node[blue!70!black] at (-3.05,0.90) {\(f(t)\)};


\def\xa{-2.70}
\pgfmathsetmacro{\ya}{\hpast{\xa}}

\draw[guide] (\xa,0) -- (\xa,\ya);
\fill (\xa,\ya) circle (1.4pt);

\draw[pred]
    plot[domain=-2.70:-2.10,samples=40,smooth]
    (\x,{\hpast{\x}});

\def\xb{-1.45}
\pgfmathsetmacro{\yb}{\hpast{\xb}}

\draw[guide] (\xb,0) -- (\xb,\yb);
\fill (\xb,\yb) circle (1.4pt);

\draw[pred]
    plot[domain=-1.45:-0.85,samples=40,smooth]
    (\x,{\hpast{\x}});

\draw[guide] (0,0) -- (0,\yzero);
\fill (0,\yzero) circle (1.5pt);
\node[above] at (0,0) {\(t_0\)};

\draw[pred]
    plot[domain=0:0.75,samples=50,smooth]
    (\x,{\qfuture{\x}});

\def\xc{1.45}
\pgfmathsetmacro{\yc}{\qfuture{\xc}}

\draw[guide] (\xc,0) -- (\xc,\yc);
\fill (\xc,\yc) circle (1.4pt);

\draw[pred]
    plot[domain=1.45:2.05,samples=40,smooth]
    (\x,{\qfuture{\x}});

\def\xd{2.65}
\pgfmathsetmacro{\yd}{\qfuture{\xd}}

\draw[guide] (\xd,0) -- (\xd,\yd);
\fill (\xd,\yd) circle (1.4pt);

\draw[pred]
    plot[domain=2.65:3.30,samples=40,smooth]
    (\x,{\qfuture{\x}});

\end{scope}


\begin{scope}[yshift=-1.0cm]

\node[font=\bfseries,align=center] at (0,2.4)
{(b) Random world after a fixed time:
 Fixed \(t_0,h\), random \(f\in\mathcal C_h\); almost-sure failure.};

\draw[axis] (-4.1,0) -- (4.1,0) node[right] {\(t\)};
\draw[axis] (-3.75,-2.1) -- (-3.75,2.15);


\draw[truef]
    plot[domain=-3.7:0,samples=120,smooth]
    (\x,{\hpast{\x}});

\node[blue!70!black] at (-3.05,0.90) {\(h(t)\)};

\draw[guide] (0,0) -- (0,\yzero);
\fill (0,\yzero) circle (1.5pt);
\node[above] at (0,0) {\(t_0\)};


\draw[pred]
    plot[domain=0:3.7,samples=120,smooth]
    (\x,{\qfuture{\x}});

\node[green!45!black] at (3.20,0.55) {\(q_h\)};


\draw[alt]
    (0,\yzero)
    .. controls (0.35,\yzero+0.65) and (0.75,\yzero+1.00) ..
       (1.25,\yzero+1.15)
    .. controls (1.85,\yzero+1.35) and (2.60,\yzero+1.05) ..
       (3.70,\yzero+1.45);

\draw[alt]
    (0,\yzero)
    .. controls (0.40,\yzero+0.35) and (0.85,\yzero+0.35) ..
       (1.35,\yzero+0.60)
    .. controls (2.00,\yzero+0.85) and (2.75,\yzero+0.55) ..
       (3.70,\yzero+0.75);

\draw[alt]
    (0,\yzero)
    .. controls (0.35,\yzero-0.10) and (0.90,\yzero-0.40) ..
       (1.40,\yzero-0.30)
    .. controls (2.00,\yzero-0.10) and (2.75,\yzero-0.50) ..
       (3.70,\yzero-0.40);

\draw[alt]
    (0,\yzero)
    .. controls (0.30,\yzero-0.45) and (0.80,\yzero-0.85) ..
       (1.40,\yzero-1.00)
    .. controls (2.10,\yzero-1.20) and (2.85,\yzero-0.90) ..
       (3.70,\yzero-1.10);

\draw[alt]
    (0,\yzero)
    .. controls (0.25,\yzero+0.10) and (0.60,\yzero-0.15) ..
       (1.10,\yzero-0.05)
    .. controls (1.80,\yzero+0.15) and (2.70,\yzero-0.10) ..
       (3.70,\yzero+0.05);

\end{scope}

\end{tikzpicture}

\caption{
Two contrasting probability thought experiments for the Hardin--Taylor predictor.
(a) A completed function \(f\) is held fixed and the prediction time is
sampled. Each sampled time almost surely lies outside the failure set
\(P_f\), so the predictor (the green curves)
agrees with \(f\) on some right-neighborhood of the sampled times.
(b) Instead, one such time \(t_0\) and its observed past \(h\) are held
fixed. The Hardin--Taylor predictor selects \(q_h\), the green curve that agrees with the original $f$,
while the true continuation almost surely disagrees with $q_h$ if
sampled from \(\mathcal C_h\), represented by the set of red curves. The resolution lies
in the observation that while $t_0 \notin P_f$,
for a randomly sampled continuation $g \in \C_h$ it is almost surely the case
that $t_0 \in P_g$.
}
\label{fig:two-probability-experiments}

\end{figure}
Our conclusion is that the theorem's surprising answer to the question of
how large the failure set \(P_f\subset \mathbb R\) can be, a question that lives in
the time domain \(\mathbb R\), is largely orthogonal to the forecasting
efficacy of the predictor, which is instead a question that lives in the space
\(\mathcal C_h\) of possible continuations compatible with the observed
past.  This distinction---the main takeaway of this note---is perhaps most
vividly seen by placing side by side two probabilistic statements that
appear to conflict but are, in fact, perfectly consistent (FIG.\ 1):

First, for any fixed completed function \(f\), the failure set \(P_f \subset \R \) is
countable and hence Lebesgue null; consequently, if the time \(t_0\) is
sampled according to any probability law \(\Pr\) on \(\mathbb R\) that is
absolutely continuous with respect to Lebesgue measure, then
\[
        \Pr(t_0\in P_f)=0,
\]
in other words, the predictor almost surely succeeds at \(t_0\).

Second, for any fixed time \(t_0\) and observed history \(h\) before \(t_0\),
if the prior \(\mu_h\) satisfies the interval-agreement null condition stated
above, then
\[
        \mu_h(A_h)=0,
\]
and consequently a randomly sampled function \(f\in\C_h\) is almost surely
not in \(A_h\); in other words, the predictor almost surely fails at \(t_0\).

Yet there is no contradiction: the two probabilistic statements are defined
on different sample spaces.  The first samples a time from \(\mathbb R\)
after the completed function \(f\) has been fixed; the second samples a
possible continuation from \(\mathcal C_h\) after the time \(t_0\) and
the observed past history \(h\) have been fixed. The resolution is
that while $t_0 \notin P_f$, we have $t_0 \in P_g$ for $\mu_h$-almost every continuation
$g \in \C_h$. Reconciling these two
statements intuitively in one's mind is essential to understanding why the
Hardin--Taylor theorem by itself does not provide a prospective ability to
forecast the future.

\end{document}